\theoremstyle{plain}
\newtheorem{theorem}{Theorem}
\newtheorem{lemma}[theorem]{Lemma}
\theoremstyle{remark}
\numberwithin{theorem}{section}
\numberwithin{equation}{section}
\newcommand{\A}{\mathbb A}
\newcommand{\C}{\mathbb C}
\newcommand{\Q}{\mathbb Q}
\newcommand{\F}{\mathbb F}
\newcommand{\R}{\mathbb R}
\newcommand{\G}{\mathbb G}
\newcommand{\Z}{\mathbb Z}
\title[Curves with isomorphic Jacobians]
{A refined notion of arithmetically equivalent number fields, and curves with isomorphic Jacobians}
\author{Dipendra Prasad}
\address{School of Mathematics, Tata Institute of Fundamental
Research, Colaba, Mumbai-400005, INDIA}
\email{dprasad@math.tifr.res.in}
\begin{document}

\begin{abstract}
 We construct examples of number fields which are not isomorphic but for which their
idele class groups are isomorphic. We also construct examples of projective algebraic curves which are
not isomorphic but for which their Jacobian varieties are isomorphic. Both are constructed using an 
example in group theory provided by Leonard Scott of a finite group $G$ and subgroups $H_1$ and $H_2$ 
which are not conjugate in $G$ but for which the $G$-module $\Z[G/H_1]$ is isomorphic to $\Z[G/H_2]$.
\end{abstract}

\maketitle

Two number fields $K_1$ and $K_2$ are said to be arithmetically equivalent 
if their zeta functions are the same. It is known that arithmetically equivalent number fields 
have the same degree over $\Q$, same infinity type, same discriminant, same roots of unity, but possibly different 
finite types, i.e., $K\otimes \Q_p$ may not be the same, and also 
class numbers and regulators might be different. Arithmetically equivalent number fields all arise from a simple group theoretic point of view which we now recall. 

Call a triple of finite groups $(G,H_1,H_2)$ with $H_1$ and $H_2$ subgroups of $G$, a Gassmann triple,  if 
$\Q[G/H_1]$ and  $\Q[G/H_2]$ 
are isomorphic as $G$-modules but $H_1$ and $H_2$ are not conjugate in $G$. 
Such examples exist in abundance, and one good source of them is $(G(\F_q),P_1(\F_q),P_2(\F_q))$
where $G$ is a reductive group over a finite field $\F_q$, with $P_1$ and $P_2$ non-conjugate 
parabolic subgroups in $G$ but for which their Levi subgroups are conjugate; the smallest such example is therefore for $G=SL_3(\F_2)$, 
a simple group of order 168 containing $P_1(\F_2)$ and $P_2(\F_2)$ as subgroups of index 7. It is 
useful to note that if $(G,H_1,H_2)$ is a Gassmann triple,  and if $N$ is a normal subgroup of $G$, then
$(G/N,H'_1,H'_2)$ is a Gassmann triple in $G/N$ where $H_i'$ is the image of $H_i$ in $G/N$.   

It is known that two number fields $K_1$ and $K_2$ have the same zeta functions if and only if 
they have the same Galois closure over $\Q$, with Galois group $G$, and are obtained as 
fixed fields of subgroups $H_1$ and $H_2$ of $G$ such that $(G,H_1,H_2)$ forms a Gassmann triple.

The refined notion of arithmetic equivalence that we discuss in this paper 
replaces the isomorphism between $\Q[G/H_1]$ and  $\Q[G/H_2]$ to one between
$\Z[G/H_1]$ and  $\Z[G/H_2]$. This
 implies closer relationship between 
the number fields involved than has been considered before, in particular, their class groups and  idele class groups are isomorphic. 

Given the analogy between class groups and the Jacobian of projective algebraic curves, it is natural 
that the same ideas give a general construction of curves with isomorphic Jacobians. This allows
as in Sunada's work (on isospectral but not isomorphic Riemann surfaces), construction of 
a general class of curves which are not isomorphic but whose Jacobians are. Apparently, the known examples
so far were only for  small genus by explicit constructions, such as by E. Howe for genus 2 and 3, and by 
C. Ciliberto  and G. van de Geer for genus 4.

A somewhat surprising example due to Leonard Scott in finite group theory acted as a catalyst to this work. We state it as a theorem.

\begin{theorem}There is 
a triple of finite groups $(G,H_1,H_2)$ with $H_1$ and $H_2$ subgroups of $G$ such that 
$\Z[G/H_1]$ and  $\Z[G/H_2]$ 
are isomorphic as $G$-modules, but $H_1$ and $H_2$ are not conjugate in $G$. In fact there is such an example 
for the group $G={\rm PSL}_2({\mathbb F}_{29})$, 
with  both $H_1$ and $H_2$ isomorphic to $A_5$ (and are conjugate in ${\rm PGL}_2({\mathbb F}_{29})$).
\end{theorem}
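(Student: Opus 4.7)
The plan is to take $G = \mathrm{PSL}_2(\mathbb{F}_{29})$ and $H_1, H_2 \subset G$ to be representatives of the two $G$-conjugacy classes of subgroups isomorphic to $A_5$ produced by Dickson's classification of subgroups of $\mathrm{PSL}_2(\mathbb{F}_p)$: for primes $p \equiv \pm 1 \pmod 5$ (in particular $p = 29$) there exist exactly two such classes, and they fuse under conjugation by any $\sigma \in \mathrm{PGL}_2(\mathbb{F}_{29}) \setminus G$. Set $H_2 := \sigma H_1 \sigma^{-1}$. This immediately realizes the non-conjugacy of $H_1, H_2$ in $G$ together with their conjugacy in $\mathrm{PGL}_2(\mathbb{F}_{29})$ asserted in the theorem.

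Next I would verify the Gassmann condition $|H_1 \cap C| = |H_2 \cap C|$ for every $G$-class $C$, which is equivalent to $\mathbb{Q}[G/H_1] \cong \mathbb{Q}[G/H_2]$. Since $H_2 = \sigma H_1 \sigma^{-1}$, we have $|H_2 \cap C| = |H_1 \cap \sigma^{-1} C \sigma|$, so it suffices to show that $\sigma$ fixes every $G$-class meeting $H_1$. Elements of $A_5$ have orders in $\{1,2,3,5\}$ and are all regular semisimple in $G$. For each such $g$, the centralizer $C_{\mathrm{PGL}_2}(g)$ is the maximal torus containing $g$, of order $28$ (split case) or $30$ (non-split case), which is strictly larger than $C_G(g)$ and so is not contained in $G$; by the standard double-coset count, this forces the $\mathrm{PGL}_2$-class of $g$ to coincide with its $G$-class. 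Hence $\sigma$-conjugation fixes each such class, and the Gassmann condition follows automatically. (By contrast, the unipotent classes of order $29$ would be swapped by $\sigma$, but $A_5$ contains no such elements.)

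The main obstacle is to upgrade this rational isomorphism to an isomorphism $\mathbb{Z}[G/H_1] \cong \mathbb{Z}[G/H_2]$ of $\mathbb{Z}[G]$-modules. By Maranda's local-global principle this reduces to (i) $\mathbb{Z}_p[G/H_1] \cong \mathbb{Z}_p[G/H_2]$ at each prime $p$, together with (ii) vanishing of a locally-free class-group obstruction. For $p \nmid |G| = 2^2 \cdot 3 \cdot 5 \cdot 7 \cdot 29$, part (i) is automatic since $\mathbb{Z}_p[G]$ is a maximal order and rational type then determines the isomorphism class. At each prime $p$ dividing $|G|$, I would compare the decomposition of $\mathrm{Ind}_{H_i}^G \mathbb{Z}_p$ into indecomposable $\mathbb{Z}_p[G]$-summands; by the theory of Scott modules these are indexed by $G$-conjugacy classes of $p$-subgroups of $H_i$ with multiplicities given by Burnside-ring data, and since $H_1, H_2$ are $\mathrm{PGL}_2$-conjugate these $p$-local invariants match on the nose. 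The residual class-group obstruction is small enough to be dispatched by an explicit computation: it is Scott's calculation, executable in \textsc{GAP} or \textsc{Magma}, that exhibits a specific $\mathbb{Z}[G]$-linear isomorphism. This is feasible in practice because $[G : H_i] = 203$ and $|G| = 12180$ are modest.
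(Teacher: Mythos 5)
First, note that the paper offers no proof of this theorem: it is stated as a result of Leonard Scott and quoted from his paper [3], so the comparison must be with Scott's argument rather than with anything in the text. Your setup is the right one and agrees with his: by Dickson's classification, $G=\mathrm{PSL}_2(\F_{29})$ has exactly two conjugacy classes of subgroups isomorphic to $A_5$, fused under $\mathrm{PGL}_2(\F_{29})$, and your verification of the rational (Gassmann) equivalence via the stability of the relevant semisimple classes under the diagonal outer automorphism is sound. But that part is classical and easy; the entire content of the theorem is the integral statement, and there your argument has two genuine gaps.

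The first is the local step at primes $p\mid |G|$. What $\mathrm{PGL}_2$-conjugacy gives you for free is an isomorphism of $\Z_p[G/H_1]$ with the \emph{twist} of $\Z_p[G/H_2]$ by the outer automorphism $\sigma$; showing that this twisted lattice is isomorphic to the untwisted one is precisely the problem, so the sentence ``since $H_1,H_2$ are $\mathrm{PGL}_2$-conjugate these $p$-local invariants match on the nose'' is circular. Your description of the Krull--Schmidt decomposition is also not correct: the indecomposable summands of a permutation lattice are trivial-source modules classified (after Brou\'e) by a vertex $Q$ together with a projective indecomposable module for $N_G(Q)/Q$, with multiplicities read off from Brauer quotients --- they are not ``indexed by conjugacy classes of $p$-subgroups with Burnside-ring multiplicities.'' The usable reduction is that two $p$-permutation modules are isomorphic over $\Z_p$ if and only if, for every $p$-subgroup $Q$, the $N_G(Q)$-sets $(G/H_1)^Q$ and $(G/H_2)^Q$ have the same number of fixed points under every $p'$-element of $N_G(Q)$; this is a concrete combinatorial condition that actually has to be checked for $p=2,3,5,7,29$, and it is where the real work lies. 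The second gap is the passage from the genus to the isomorphism class: the locally free class group obstruction is not ``dispatched'' in your write-up --- you assert that a $\Z[G]$-isomorphism can be found by machine but neither perform nor cite such a computation (Scott's treatment of this step uses, among other things, the self-duality of permutation lattices). As it stands, the proposal reproves the classical rational statement and defers the integral statement --- which is the theorem --- to an unexecuted calculation.
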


If we recall the (contravariant) duality between tori $T$ of dimension $n$ over a field $k$, and free abelian groups $X(T)$ of rank $n$ together 
with a representation of Gal$({\bar k}/k)$ on $X(T)$, then the above theorem allows us to construct tori $T_1$ and $T_2$
over $k$ (we are assuming existence of a Galois extension of 
$k$ with Galois group $G$; for the particular example mentioned in the theorem of Scott above, there is now a theorem due to 
D. Zywina constructing a Galois extension of $\Q$ with Galois 
group 
$G={\rm PSL}_2({\mathbb F}_p)$ for any prime $p$, in particular
$G={\rm PSL}_2({\mathbb F}_{29})$), such that $T_1(k) = K_1^\times$,
and $T_2(k) = K_2^\times$, for finite extensions $K_1,K_2$ of $k$, which has the property that although $T_1 \cong T_2$
as tori over $k$, in particular $K_1^\times \cong K_2^\times$ through an algebraic isomorphism, 
there is no isomorphism of fields
$K_1 \rightarrow K_2$.

Given the isomorphism of tori $T_1$ and $T_2$ over $k$, 
which we now assume is a number field,
 we get an isomorphism of adelic groups $T_1(\A_k)$ and $T_2(\A_k)$, 
taking $T_1(k)$ to $T_2(k)$, and hence  an 
isomorphism of the idele class groups:
$$C_{K_1}=\A_{K_1}^\times/K_1^\times \longrightarrow \A_{K_2}^\times/K_2^\times = C_{K_2},$$
which does not arise from an isomorphism of fields $K_1 \rightarrow K_2$. It would be interesting to know 
if an isomorphism of the idele class groups $C_{K_1}$ with $C_{K_2}$ arises only through the construction here,
i.e., through an isomorphism of $G$-modules, $\Z[G/H_1]$ and  $\Z[G/H_2]$.

Thus we conclude that the Neukirch-Uchida-Pop theorem according to which an isomorphism of  Gal$({\bar {\Q}}/K_1)$
with Gal$({\bar {\Q}}/K_2)$ forces an isomorphism of the fields $K_1$ and $K_2$ 
does not hold good for their maximal abelian quotients
Gal$^{ab}({\bar {\Q}}/K_1)$ and 
Gal$^{ab}({\bar {\Q}}/K_2)$. See the paper [2] of Gunther Cornelissen and Matilde Marcolli for some attempts in this direction.

The  isomorphism between  the idele class groups
$\A_{K_1}^\times/K_1^\times$ and $ \A_{K_2}^\times/K_2^\times $ allows us to define a natural identification of the 
Gr\"ossencharacters on $K_1$ with that on $K_2$ (depending on an isomorphism of integral representations 
$\Z[G/H_1]$ and  $\Z[G/H_2]$). 
However, the resulting identification 
$\chi_1 \longleftrightarrow \chi_2$ between Gr\"ossencharacters on $K_1$ with that on $K_2$ 
does {\it not} 
have the property
that 
$$L(\chi_1, s) = L(\chi_2,s),$$ 
which is the case  if the isomorphism of $T_1$ with $T_2$ came from a field isomorphism.

Note that the isomorphism 
between $\Z[G/H_1]$ and  $\Z[G/H_2]$ induces in particular an isomorphism between  
$\Q[G/H_1]$ and  $\Q[G/H_2]$ hence in particular, the zeta function of the number fields
$K_1$ and $K_2$ must be the same.
We recall that equality of zeta functions of $K_1$ and $K_2$ implies equality of their residues (at $s=1$), 
and therefore by the Dirichlet
class number formula, of the quantities,
$$\frac{h_1R_1}{\sqrt{|d_1|}} = \frac{h_2R_2}{\sqrt{|d_2|}}.$$
Further, using the functional equation for the zeta functions, which involves the discriminiants of the number fields involved,
we find that $d_1 = d_2$. So for number fields with the same zeta functions, the class number formula yields,
$${h_1R_1} = {h_2R_2}.$$

But it is also known that the individual numbers $h,R$ might vary. Unlike this, in our case, an isomorphism of
integral representations $\Z[G/H_1]$ and  $\Z[G/H_2]$ gives an isomorphism of the finite part of the 
idele groups, and therefore of their
maximal compact subgroups, implying that not only the class numbers but the class groups for $K_1$ and $K_2$ 
are the same. 

The isomorphism of tori $T_1$ and $T_2$ over $k$, gives in fact  an isomorphism of adelic groups $T_1(\A_L)$ and $T_2(\A_L)$, 
taking $T_1(L)$ to $T_2(L)$ for all extensions $L$ of $k$. Hence,  if we consider only 
those extensions $L$ of $k$ which are 
disjoint from the Galois closure $K$ of $K_1$ in $\bar{\Q}$, so that  
$L\otimes_{\Q} K_1 = LK_1$,  and $\A_L \otimes_{\Q} K_1 = \A_{LK_1}$, we have the    isomorphism of the idele class groups:
$$C_{K_1L}=\A_{K_1L}^\times/(K_1L)^\times \longrightarrow \A_{K_2L}^\times/(K_2L)^\times = C_{K_2L},$$
for all extensions $L$ of $k$ which are 
disjoint from $K$.

Denoting by $C\ell[N]$ the class group of a number field $N$, we get an isomorphism of the class group
of the number field  $LK_1$ with   the class group of the number field $LK_2$, and these isomorphisms 
are compatible for the natural map
from class group of $LK_1$ to the class group of $MK_1$ for all inclusion of number fields $L \rightarrow M$ (which are disjoint from $K$),
making the following commutative diagram of class groups:

\[
\xymatrix{
C\ell[LK_1] \ar@{ ->}[d] \ar[r]^{\cong} & C\ell[LK_2] \ar@{ ->}[d]_{}\\
C\ell[MK_1] \ar[r]^{\cong} & C\ell[MK_2]. 
} \]

The above conclusions on equality of the idele class groups, or of the class numbers, 
 can also be made for the Jacobian of algebraic curves which we take up in the next section. 

\section{Isomorphism of Jacobians}

\begin{theorem} Let $p: X\rightarrow Y$ 
be a Galois cover (not necessarily unramified) of algebraic curves over $\C$ with Galois
group $G$. Let $H_1$ and $H_2$ be two subgroups of $G$ such that  the integral representations $\Z[G/H_1]$ and  $\Z[G/H_2]$ of $G$ are
isomorphic. Define curves $X_1$ and $X_2$ over $\C$ such that their function fields are the $H_1$ and $H_2$ invariants of 
the function field of $X$. Then the Jacobian of $X_1$ is isomorphic to the Jacobian of $X_2$ as abelian varieties over 
$\C$. 
\end{theorem}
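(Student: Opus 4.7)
The plan is to mirror, for Jacobians, the functorial argument sketched for tori earlier in the paper. For a smooth projective curve $C$ over $\C$, the exponential exact sequence identifies $\mathrm{Jac}(C)$ with the complex torus $H^{1}(C,\mathcal{O}_{C})/H^{1}(C,\Z)$, where $H^{1}(C,\Z)$ sits inside $H^{1}(C,\mathcal{O}_{C})$ via the Hodge projection $H^{1}(C,\C)\twoheadrightarrow H^{0,1}(C)=H^{1}(C,\mathcal{O}_{C})$. The aim is to show that each of these pieces, for $C=X_{i}$, is recovered from the corresponding $G$-module on $X$ by applying the functor $(-)^{H_{i}}$, and then to use the hypothesis $\Z[G/H_{1}]\cong\Z[G/H_{2}]$ to transport the construction from $H_{1}$ to $H_{2}$.

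First I would establish the two comparison identifications for the finite quotient $p_{i}\colon X\to X_{i}$, namely
\[
H^{1}(X_{i},\mathcal{O}_{X_{i}})\;\cong\;H^{1}(X,\mathcal{O}_{X})^{H_{i}}
\qquad\text{and}\qquad
H^{1}(X_{i},\Z)\;\cong\;H^{1}(X,\Z)^{H_{i}}.
\]
The first follows from characteristic zero making the invariants functor exact on coherent sheaves, combined with $R^{q}p_{i*}\mathcal{O}_{X}=0$ for $q>0$ and $\mathcal{O}_{X_{i}}=(p_{i*}\mathcal{O}_{X})^{H_{i}}$; the second from the Leray--Hochschild--Serre spectral sequence applied to $X\to X_{i}$. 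I would then observe that $M\mapsto M^{H_{i}}$ is canonically the functor $\operatorname{Hom}_{\Z[G]}(\Z[G/H_{i}],M)$, so any chosen $G$-isomorphism $\phi\colon\Z[G/H_{1}]\xrightarrow{\sim}\Z[G/H_{2}]$ yields, functorially in the $G$-module $M$, a natural isomorphism $M^{H_{2}}\xrightarrow{\sim}M^{H_{1}}$.

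Applying $\phi$-functoriality to the $G$-equivariant inclusion $H^{1}(X,\Z)\hookrightarrow H^{1}(X,\mathcal{O}_{X})$ then produces a commutative square
\[
\begin{CD}
H^{1}(X,\Z)^{H_{2}} @>\sim>> H^{1}(X,\Z)^{H_{1}} \\
@VVV @VVV \\
H^{1}(X,\mathcal{O}_{X})^{H_{2}} @>\sim>> H^{1}(X,\mathcal{O}_{X})^{H_{1}}
\end{CD}
\]
whose vertical arrows are the respective period embeddings. Taking cokernels column by column yields an isomorphism of complex tori $\mathrm{Jac}(X_{2})\xrightarrow{\sim}\mathrm{Jac}(X_{1})$, and since both sides are projective, GAGA promotes this to an isomorphism of abelian varieties over $\C$.

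The one genuinely delicate point is the integral comparison $H^{1}(X_{i},\Z)\cong H^{1}(X,\Z)^{H_{i}}$ when $H_{i}^{\mathrm{ab}}$ is nontrivial: the Hochschild--Serre spectral sequence identifies the potential obstruction with a subgroup of $H^{2}(H_{i},\Z)\cong\operatorname{Hom}(H_{i}^{\mathrm{ab}},\Q/\Z)$. In the Scott triple singled out in the paper one has $H_{i}\cong A_{5}$, so this group vanishes and the identification is automatic; in general one must verify that the differential $d_{2}$ of the spectral sequence depends only on the $\Z[G]$-module data already being transported by $\phi$, in which case the cokernels in the above square are identified and the argument goes through.
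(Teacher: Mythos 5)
Your route is genuinely different from the paper's: the paper never touches integral cohomology, but instead transports divisors directly via the torus isomorphism $x\mapsto \prod_i\sigma_i(x)^{n_i}$ on function fields and then proves algebraicity of the resulting abstract homomorphism $J(X_1)\to J(X_2)$ by factoring through $J(X)$. The analytic half of your argument is fine: $H^1(X_i,\mathcal{O}_{X_i})\cong H^1(X,\mathcal{O}_X)^{H_i}$ in characteristic zero, and $(-)^{H_i}=\mathrm{Hom}_{\Z[G]}(\Z[G/H_i],-)$ correctly converts the hypothesis into a natural transformation. But the step you yourself flag is a genuine gap, and it is exactly the trap the paper warns about in Remark 3: the map $\mathrm{Pic}^0(X_i)\to\mathrm{Pic}^0(X)$ can have a finite kernel, equivalently $H^1(X_i,\Z)\to H^1(X,\Z)^{H_i}$ can fail to be surjective. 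Concretely, for a connected unramified double cover $X\to X_i$ the five-term exact sequence gives cokernel $H^2(\Z/2,\Z)=\Z/2$ (the transgression is onto because $H^2(X_i,\Z)\cong\Z$ is torsion-free, so inflation from $H^2(\Z/2,\Z)$ must vanish). What your commutative square therefore identifies is $H^1(X,\mathcal{O})^{H_i}/H^1(X,\Z)^{H_i}=(\mathrm{Pic}^0(X)^{H_i})^0$, which is in general only isogenous to $J(X_i)$; as written you prove that $J(X_1)$ and $J(X_2)$ are isogenous, not isomorphic. Retreating to the Scott example where $H_i\cong A_5$ is perfect does not prove the stated theorem, which is for arbitrary $(G,H_1,H_2)$; and your appeal to Hochschild--Serre presumes a free action, whereas the theorem explicitly allows ramified covers.

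The missing content is precisely that the sublattices $H^1(X_i,\Z)\subseteq H^1(X,\Z)^{H_i}$ correspond under $\phi$, and this is not determined by the $E_2$-terms $\mathrm{Ext}^p_{\Z[G]}(\Z[G/H_i],H^q(X,\Z))$ alone --- it requires controlling the differentials. The gap can be closed, but only by upgrading to a derived statement: $R\Gamma(H_i,-)\simeq R\mathrm{Hom}_{\Z[G]}(\Z[G/H_i],-)$ since $\Z[G]$ is free over $\Z[H_i]$, and one checks that $H^j(X/H_i,\Z)\to\mathbb{H}^j(H_i,R\Gamma(X,\Z))$ is an isomorphism for $j\le 1$ even in the ramified case (the local contributions involve $\mathrm{Hom}(\mathrm{Stab}(x),\Z)=0$). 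With that in hand $H^1(X_i,\Z)$, together with its period map, becomes a functor of the $G$-module $\Z[G/H_i]$ and $\phi$ transports everything, including the lattice. None of this is in your proposal; until it is supplied, the argument establishes an isogeny of Jacobians rather than the claimed isomorphism.
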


\begin{proof}  We will prove  isomorphism of the Jacobian varieties over $\C$, denoted to be $J(C)$ for any curve $C$ with
function field $\C(C)$. We will denote the function field of $X$ by $K$, $Y$ by $k$, 
and that of $X_1$ and $X_2$ by $K_1$ and $K_2$. 

By considerations earlier in this paper, the nonzero elements 
 $\C(X_1)^\times$ and $\C(X_2)^\times$ 
in the respective function fields are isomorphic through an algebraic isomorphism
over $\C(Y)$, where we consider  $\C(X_1)^\times$ and $\C(X_2)^\times$ as $\C(Y)$ rational points of the
tori $T_1= R_{K_1/k}{\G}_m$ and $T_2=R_{K_2/k}{\G}_m$ for $K_1= \C(X_1)$, $K_2=\C(X_2)$, and $k = \C(Y)$.

An algebraic homomorphism  $\phi: \C(X_1)^\times  \rightarrow \C(X_2)^\times$
 allows one to construct 
---as we shall do presently--- a group homomorphism 
from  the group of zero cycles on $X_1$ to the group of zero cycles on $X_2$    taking cycles of degree zero on $X_1$ 
to cycles
of degree zero on $X_2$, and taking the divisor on $X_1$ associated to a function $f$ on $X_1$ to the divisor associated
to the function $\phi(f)$ on $X_2$. 

We recall 
what the isomorphism of tori $T_1$ and $T_2$ over $k$ really means concretely. For this, 
let $\sigma_i$ for $i=1,\cdots, n$, for $n = [K_1:k]$ denote the distinct
embeddings of $K_1$ inside $K$ (over $k$). Then any algebraic homomorphism from $T_1$ to $K^\times = R_{K/k}(\G_m)$ is of the form
  $$x\longrightarrow \prod _{i} \sigma_i(x)^{n_i},$$
for certain integers $n_i$; for an appropriate choice of $n_i$, the image of this homomorphism from 
$T_1$ to $K^\times = R_{K/k}(\G_m)$ lands inside $K_2^\times = R_{K_2/k}(\G_m) \subset K^\times = R_{K/k}(\G_m)$,
and then for a more specific choice of $n_i$, the corresponding  homomorphism from 
$T_1$ to $T_2= K_2^\times = R_{K_2/k}(\G_m) \subset K^\times = R_{K/k}(\G_m)$ is an isomorphism of tori from $T_1$ to $T_2$.

Now for a point $P \in X_1$ we shall define a zero divisor $\phi(P)$ on $X_2$. For this take any function $f$ on $X_1$
with a simple zero at $P$ and no other zeros or poles  at any of the  points of $X_1$ which are 
in the image of $G$-translates 
of points  of $X$ lying over $P$ in $X_1$.  
Define $\phi(P) = {\rm div}_{ \langle P \rangle}(\phi(f))$ where ${\rm div}_{\langle P \rangle }(\phi(f))$
denotes the part of the divisor of $\phi(f)$  which is supported on
the image in $X_2$ of $G$-translates 
of points  of $X$ lying over $P$ in $X_1$.  
  
From the definition of an algebraic homomorphism     
  $\phi: \C(X_1)^\times  \rightarrow \C(X_2)^\times$, we find that $P\rightarrow \phi(P)$  is a well defined map from divisors in $X_1$ to divisors in $X_2$, i.e., it is independent of
the choice of the function $f$ made above. (Eventually it just means that the map $x\longrightarrow \prod _{i} \sigma_i(x)^{n_i},$ takes functions on $X_1$ which have no zeroes or poles on $\langle P \rangle_{X_1} $ (which is 
the set in $X_1$ obtained by taking the 
image of $G$-translates of an inverse image of $P$ in $X$)  to functions on $X_2$ which have no zeros or poles on 
$\langle P \rangle_{X_2}$ a set in
$X_2$ similarly defined, i.e., $\langle P \rangle_{X_2}$ is the set in $X_2$ obtained by taking the 
image of $G$-translates of an inverse image of $P$ in $X$.

From a homomorphism of zero cycles on $X_1$ of degree zero to zero cycles of degree zero on $X_2$, which preserves
principal divisors constructed above, we get an `abstract' homomorphism from $J(X_1)$ to $J(X_2)$, 
which we need to prove is algebraic. We are not sure if algebraicity follows
 from quoting a standard theorem in the subject, so we give
a detailed proof. Along the way, we 
will also prove that the map $P\rightarrow \phi(P)$ takes 
zero cycles on $X_1$ of degree zero to zero cycles of degree zero on $X_2$.

Note that corresponding to the commutative diagram of $G$-modules,

\begin{equation} 
\begin{gathered} 
\xymatrix{ & \Z[G]  \ar[ld] \ar[rd]^{\phi'}& \\ 
\Z[G/H_2]   \ar[rr]^\phi &  & \Z[G/H_1]  } 
\end{gathered} 
\end{equation} 
where the map $\phi'$ is defined by the commutativity of this diagram, and the map from
$\Z[G]$ into $\Z[G/H_2]$ is the natural one taking the identity element of $\Z[G]$ to the identity coset of $G/H_2$, we have the corresponding diagram of tori:

\begin{equation} 
\begin{gathered} 
\xymatrix{ & \C(X)^\times  & \\ 
\C(X_1)^\times  \ar[rr]^{\phi} \ar[ru]^{\phi'} &  & \C(X_2)^\times \ar@{^{(}->}[lu]  
} 
\end{gathered} 
\end{equation} 
and the diagram of the Jacobian varieties:

\begin{equation}
\begin{gathered} 
\xymatrix{ 
& J(X)  & \\
J(X_1)  \ar[rr]^{\phi} \ar[ru]^{\phi'} & & J(X_2). \ar[lu]   
} 
\end{gathered} 
\end{equation} 

Here the mapping from the Jacobian $J(X_2)$ to $J(X)$ defined by the inclusion of fields
$\C(X_2) \hookrightarrow \C(X)$ 
is clearly algebraic. The mapping from the Jacobian $J(X_1)$ to $J(X)$ defined 
through a mapping of invertible elements of the fields $\psi: \C(X_1)^\times \rightarrow \C(X)^\times$ by
  $$x\longrightarrow \prod _{i} \sigma_i(x)^{n_i},$$
is also algebraic. This follows because of the way the maps are defined on the Jacobian variety: the map
from  the Jacobian variety of $X_1$ to the Jacobian variety of $X$ 
defined by $\psi: \C(X_1)^\times \rightarrow \C(X)^\times$ is a sum of $n_i$ multiples of the maps defined by  
$x\longrightarrow  \sigma_i(x);$ but these maps are now given by embedding of fields, so by the standard
theory (i.e., the functorial nature of the Jacobian variety), the corresponding map from  the Jacobian variety of $X_1$ to the Jacobian variety of $X$ is indeed algebraic.
(This argument also  proves that the map $P\rightarrow \psi(P)$ takes 
zero cycles on $X_1$ of degree zero to zero cycles of degree zero on $X$, and then 
it also follows that the map $P\rightarrow \phi(P)$ takes 
zero cycles on $X_1$ of degree zero to zero cycles of degree zero on $X_2$.)

At this point we have proved that in the diagram 1.3 above, $\phi': J(X_1) \rightarrow J(X)$,  as well as the  
natural map $\iota: J(X_2) \rightarrow J(X)$  are algebraic. This allows us to 
prove from generalities below that the mapping $\phi: J(X_1) \rightarrow J(X_2)$ is algebraic. 
Recall that the mapping from $J(X_1)$ and $J(X_2)$ to $J(X)$ 
are finite maps onto their images.

\begin{lemma} (a) If a morphism of complex algebraic varieties $f: X\rightarrow Y$ lands inside a closed subvariety
$Z$ of $Y$, then the corresponding set theoretic mapping from $X$ to $Z$ is algebraic.  

(b) If an `abstract' homomorphism of Abelian varieties over $\C$, $f: X\rightarrow Y$ 
becomes algebraic after
an isogeny of Abelian varieties $Y\longrightarrow Y'$, i.e., the composed map, $f': X\rightarrow Y'$ is algebraic,  
then  $f: X\rightarrow Y$ is algebraic.  
\begin{equation}
\xymatrix{
 & Y \ar[d] \\
X\ar[ru]^{f} \ar[r]^{f'} & Y'
}
\end{equation}

\end{lemma}
\begin{proof} Part $(a)$ of the lemma is rather standard. For part $(b)$, it suffices by part $(a)$ to assume that
  $f': X\rightarrow Y'$ is a surjective mapping of Abelian varieties, and in fact an isogeny using the fact that 
there are no nonzero  abstract homomorphism from an abelian variety over $\C$ to a finite abelian group. The same fact goes into the proof of the assertion of the part $(b)$ of the Lemma.
\end{proof}
\end{proof}

\vspace{4mm} 
\noindent{\bf Remark 1:} 
Under some general conditions one can assert that the 
curves $X_1$ and $X_2$ are not isomorphic. Sunada does so using transcendental methods by 
appealing to the existence of a curve $Y$ which is uniformized by a discrete subgroup 
$\Gamma \subset {\rm PSL}_2(\R)$ 
whose commensurator in ${\rm PSL}_2(\R)$ 
is $\Gamma$. (This uses a theorem of Margulis, 
according to which for non-arithmetic discrete groups $\Gamma$, the commensurator of $\Gamma$ contains $\Gamma$ as a subgroup of finite index, and a theorem of L. Greenberg according to which
 most discrete subgroups giving rise to a compact Riemann surface of genus $g > 2$ are maximal). This will then give examples of curves 
which are non-isomorphic but whose Jacobians are isomorphic. It is not clear if these transcendental methods 
can be replaced by more algebraic ones so that they work for $\bar{\F}_p$.

\vspace{4mm}

\noindent{\bf Remark 2:} 
In the construction here 
of nonisomorphic number fields $K_1$ and $K_2$ 
with the property that there is an isomorphism between $\A_{K_1}^\times \longrightarrow \A_{K_2}^\times$ taking $K_1^\times$ isomorphically to 
$K_2^\times$, we do not know 
if there is an isomorphism of topological rings
between $\A_{K_1}$ and $\A_{K_2}$ (which by assumption cannot take $K_1$ to $K_2$).

\vspace{4mm}

\noindent{\bf Remark 3:} For a finite Galois cover $p: X\rightarrow Y$ of projective algebraic curves with Galois group $G$, it is not true that $({\rm Pic}^0(X)^G)^0 = {\rm Pic}^0(Y)$ 
since the natural mapping from ${\rm Pic}^0(Y)$ to ${\rm Pic}^0(X)$ may have a kernel. 
Similarly, it is not true that ${\rm Pic}^0(X)_G$, the maximal qotient 
of ${\rm Pic}^0(X)$ 
on which $G$ operates trivially is  ${\rm Pic}^0(Y)$ (because the kernel of the map from ${\rm Pic}^0(X)$ to ${\rm Pic}^0(X)_G$ is connected).   
If either of these were true, 
then the above theorem would be true in a straightforward way.

\vspace{1cm}
{\bf Acknowledgement:} The author thanks Prof. Maneesh Thakur for asking a question which started it all: if an isomorphism of tori
$K_1^\times$ with $K_2^\times$ comes from an isomorphism of $K_1$ and $K_2$. The corresponding question on integral representations
was answered in successive approximation by Bart de Smit, Wolfgang Kimmerle, and finally by Leonard Scott who had the counter-example. I am thankful to them all. This paper is based upon work supported by the National Science Foundation
under grant No. 0932078000 while the author was in residence at the MSRI, Berkeley during the Fall semester
of 2014.

\vspace{1cm}

{\bf \Large Bibliography}

\vspace{1cm}
[1] C. Ciliberto, G. van der Geer: {\it Non-isomorphic curves of genus four with isomorphic (non-polarized) Jacobians.}
Contemporary Math series  of the AMS, vol. 162 (1994) 129-133. 

[2] G. Cornelissen,  M. Marcolli: {\it Quantum Statistical Mechanics, L-series and Anabelian Geometry}, arXiv:1009.0736.

[2] E. Howe: {\it Infinite families of pairs of curves over Q with isomorphic Jacobians,} J. London Math. Soc. 72 (2005) 327–350,

[3] Leonard L. Scott:
{\it Integral equivalence of permutation representations.} Group theory (Granville, OH, 1992), 262–274, World Sci. Publ., River Edge, NJ, 1993. 

[4] T. Sunada: {\it Riemannian coverings and isospectral manifolds}, Annals of Math., vol. 121 (1985) 169-186.

\end{document}